\documentclass[reqno, a4paper]{amsart}

\usepackage{amsfonts}
\usepackage{amsmath}
\usepackage{amssymb}
\usepackage{amsthm}
\usepackage[backend=biber, giveninits=true, maxbibnames=99]{biblatex}
\usepackage{enumitem}
\usepackage{mathtools}
\usepackage[hidelinks, pdfusetitle]{hyperref}
\usepackage[capitalise]{cleveref}
\usepackage{crossreftools}

\DeclarePairedDelimiterX{\ha}[2]{\{}{\}}{#1 \,\delimsize|\, #2}
\DeclarePairedDelimiter{\zj}{(}{)}

\DeclarePairedDelimiterX{\sk}[2]{\langle}{\rangle}{#1, #2}

\DeclareMathOperator*{\argmin}{arg\,min}
\DeclareMathOperator{\prox}{prox}

\DeclareMathOperator{\Lip}{Lip}

\newcommand{\R}{\mathbb R}
\newcommand{\C}{\mathbb C}
\newcommand{\Bbar}{\overline B}
\newcommand{\one}{\mathbf 1}

\theoremstyle{plain}
\newtheorem{Thm}{Theorem}
\newtheorem{Prop}[Thm]{Proposition}
\newtheorem{Cor}[Thm]{Corollary}
\newtheorem{Lemma}[Thm]{Lemma}

\theoremstyle{definition}
\newtheorem{Def}[Thm]{Definition}
\newtheorem{Rem}[Thm]{Remark}
\newtheorem{Question}[Thm]{Question}

\numberwithin{Thm}{section}

\crefname{Prop}{Proposition}{Propositions}
\crefname{Thm}{Theorem}{Theorems}
\crefname{Lemma}{Lemma}{Lemmas}
\crefname{equation}{}{}

\setlist[enumerate]{label=(\arabic*)}

\renewbibmacro{in:}{}
\DeclareFieldFormat{pages}{#1}
\DeclareFieldFormat[article]{volume}{\mkbibbold{#1}}
\DeclareFieldFormat[article, inbook]{title}{\mkbibitalic{#1}}
\DeclareFieldFormat{booktitle}{#1}
\DeclareFieldFormat{journaltitle}{#1\isdot}

\title{Solution of Laczkovich's Lipschitz mapping problem}
\author{Attila Gáspár}
\address{Institute of Mathematics, ELTE Eötvös Loránd University, Pázmány Péter s. 1/C, 1117 Budapest, Hungary \newline\indent
HUN-REN Alfréd Rényi Institute of Mathematics, Reáltanoda u. 13-15, 1053 Budapest, Hungary}
\email{gsprati99@gmail.com}

\begin{document}

\begin{abstract}
    In 1991, Laczkovich asked whether every measurable set of positive measure in $\R^d$ can be mapped onto a cube with a Lipschitz map. We show that the answer is positive for every $d$. Our results also give a positive answer for Kolmogorov's problem on contractions onto polygons with small measure loss in the case of Jordan measurable sets.
\end{abstract}

\maketitle

\section{Introduction}

In 1991, Laczkovich \cite{Laczkovich} asked the following question:
\begin{Question}[Laczkovich]
    Let $A \subseteq \R^d$ be a measurable set of positive Lebesgue measure. Is there a Lipschitz map that maps $A$ onto a cube?
\end{Question}
Equivalently, we may ask whether there is a Lipschitz map such that the image has non-empty interior.

For $d = 1$, an explicit construction of such a map can be easily given. In the case $d = 2$, a positive answer was first given by Preiss \cite{Preiss}. Using a combinatorial argument based on the Erdős--Szekeres theorem, Matoušek \cite{Matousek} later proved the following absolute constant version of the result:
\begin{Thm}[Matoušek]
    There exists a positive constant $c$ with the property that for every measurable set $A \subseteq \R^2$ with $\lambda(A) = 1$, there exists a 1-Lipschitz surjection $f : A \to [0, c]^2$.
\end{Thm}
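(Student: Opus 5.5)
A natural route to Matoušek's theorem is by a grid-based reduction followed by a combinatorial selection step, and finally a Kirszbraun-type extension.

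\emph{Reduction to a discrete configuration.} By inner regularity we may assume $A$ is compact with $\lambda(A)=1$. We fix a fine grid of squares of side $\delta$. By the Lebesgue density theorem, for $\delta$ small almost all of the measure of $A$ sits in grid squares where $A$ has density at least $1-\varepsilon$. Within such a square $Q$, the set $A\cap Q$ meets almost every horizontal and vertical line through $Q$ in a set of nearly full length. So a 1-Lipschitz map can send $A\cap Q$ onto a slightly smaller square: we project in each coordinate, using a measure-preserving rearrangement on lines. There are $N\approx 1/\delta^2$ such dense squares. The task then becomes combinatorial: choose a large subfamily of these squares and assemble the pieces, by 1-Lipschitz maps, into a square whose area is a constant fraction of $N\delta^2$.

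\emph{Combinatorial step via Erdős--Szekeres.} We order the dense squares by their centers. By Erdős--Szekeres (or a coordinate-wise monotone-subsequence argument) we extract about $\sqrt N$ squares forming a monotone chain in both coordinates. Then we use the observation that a monotone chain of squares can be mapped 1-Lipschitz onto a straight row: the map $x \mapsto$ (arclength-type coordinate along the chain) is 1-Lipschitz for a monotone staircase in the $\ell_1$ sense, up to a constant factor. Iterating this, or better applying it to a two-parameter family, we partition the dense squares into roughly $\sqrt N$ monotone chains. This is possible by a Dilworth/Erdős--Szekeres-type decomposition into chains and antichains, where antichains are also monotone (decreasing). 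Each chain maps onto a segment-row of length comparable to its cardinality times $\delta$. We then stack the rows as parallel strips to form a rectangle of area $\gtrsim N\delta^2\cdot c$, taking care that the map stays Lipschitz across different chains. To make it Lipschitz across chains, we define the map on the union of the chosen squares and verify the Lipschitz condition only on that union. We then extend by Kirszbraun's theorem (valid for $\R^2\to\R^2$), which preserves the constant, and trim to a square inside the image.

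\emph{Main obstacle.} The hard step is the gluing across chains. Two squares from different chains may be close in the plane but sent far apart. I would handle this by selecting, instead of all chains, a single pair of ``product'' structures. Take $\sqrt N$ increasing chains that are mutually ordered, i.e.\ nested like level sets of a monotone function, so that the chain index itself is a 1-Lipschitz function of position. Alternatively, pick a subfamily of squares forming a grid-like pattern in which both the row index and the position within the row are monotone, hence 1-Lipschitz, functions of location. The image is then covered by the product map. Losing a constant factor in this selection is what yields the absolute constant $c$. Uniformity in $A$ follows because every bound depends only on $N\delta^2\approx 1$.
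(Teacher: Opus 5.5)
The paper does not prove this theorem; it only quotes it from Matoušek. Its own $\prox$-map method gives just the qualitative version: $[0,\varepsilon^{-1}]\times[0,\varepsilon]$ lies in the strip $S(f)$ of $f(x)=\frac{\varepsilon}{2}|x_2-\frac{\varepsilon}{2}|$, so $\gamma\le\varepsilon$ and no $\prox_g$-image of it contains a ball of radius more than $\varepsilon/2$. So I judge your proposal on its own. The outer reductions (inner regularity, Kirszbraun, retraction onto the square) are fine. The two middle steps, however, have genuine gaps, and between them they carry the entire difficulty.

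\emph{The per-cell step fails.} Consider rearranging along lines, e.g.\ $(x,y)\mapsto\bigl(x,\lambda(\{t\le y:(x,t)\in A\cap Q\})\bigr)$. This map is not Lipschitz in the transverse variable, because sections of a measurable set need not vary Lipschitz-continuously. Already if $A\cap Q$ is $Q$ minus one small disk, the section-length function has infinite slope where vertical lines become tangent to the disk. Also, density $1-\varepsilon$ gives (via Fubini and Chebyshev) only that \emph{most} lines meet $A$ in nearly full length, not almost all. More fundamentally, every set of positive measure contains squares of density $>1-\varepsilon$ for every $\varepsilon>0$, so this step by itself is the qualitative Laczkovich problem. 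Your argument is dimension-free, so if it worked it would settle $d\ge 3$ in a few lines. That is exactly what required the generalized-strip machinery of this paper.

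\emph{The assembly step assumes what is to be proved.} Erdős--Szekeres gives one monotone chain of about $\sqrt N$ cells. A partition into $O(\sqrt N)$ monotone chains gives chains of both orientations and arbitrary lengths, interleaved in the plane. Nothing forces a constant fraction of the cells into chains whose index is a Lipschitz function of position. The inference ``monotone, hence 1-Lipschitz'' is false: rank within a row, or chain index, can jump by a lot between neighbouring cells. A subfamily of $\gtrsim N$ cells on which both chain index and position along the chain are Lipschitz, filling a $\sqrt{cN}\times\sqrt{cN}$ pattern, is precisely a Lipschitz map onto a grid, i.e.\ the discrete form of the statement itself. Even with every cell completely full, the assembly problem for a random half of the cells of an $n\times n$ block is as hard as the original problem: rank-in-row differs between adjacent rows by order $\sqrt n$.

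What this route really needs is a discrete lemma with absolute constants, proved by an actual combinatorial argument. It should say that every $N$-point set $P\subseteq\mathbb{Z}^2$ admits an $L$-Lipschitz map $P\to\mathbb{Z}^2$ whose image contains an $m\times m$ block with $m\ge c\sqrt N$. Given that lemma, apply it to the $\delta$-cells that merely \emph{meet} a compact $A$, with no density hypothesis. This yields maps on $A$ that are $L$-Lipschitz up to an additive $O(\delta)$ error, with $O(\delta)$-dense image in a square of side $\ge c\sqrt{\lambda(A)}$. Pass to the limit $\delta\to 0$ by Arzelà--Ascoli and compactness of $A$, then rescale by $1/L$ and retract. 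This removes the per-cell step altogether.
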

\noindent Here, $\lambda(A)$ denotes the Lebesgue measure of $A$.

An alternative proof for the planar case was found by Jones using an earlier result of Uy \cite{Uy}:
\begin{Thm}[Uy]
    For every compact set $C \subseteq \C$ with $\lambda(C) > 0$, there is a non-constant Lipschitz map $f : \overline \C \to C$ that is holomorphic on $\overline \C \setminus C$.
\end{Thm}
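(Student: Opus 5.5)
The plan is to build $f$ as the Cauchy transform of a carefully chosen bounded density supported on $C$. For $h \in L^\infty(\C)$ with $0 \le h \le \one_C$ put
\[
f(z) = \int_C \frac{h(w)}{w - z}\, d\lambda(w).
\]
This $f$ is continuous on $\C$ and holomorphic on $\C \setminus C$. As $z \to \infty$ we have $f(z) = -\bigl(\int h\bigr)/z + O(|z|^{-2})$, so $f$ extends holomorphically to $\infty$ with $f(\infty) = 0$. It is non-constant as soon as $\int h > 0$.

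In the sense of distributions, $\partial_{\bar z} f = -\pi h$, and $\partial_z f$ is a constant multiple of the Beurling transform $Bh$. So if $Bh \in L^\infty$, both Wirtinger derivatives are bounded and $f$ is Lipschitz on $\C$. Since $f$ is holomorphic near $\infty$ and bounded, $f$ is also Lipschitz for the spherical metric on $\overline{\C}$. The theorem therefore reduces to the following lemma: there exist $\delta > 0$, $M < \infty$ and $h$ with $0 \le h \le \one_C$, $\int h \ge \delta$, and $\|Bh\|_\infty \le M$.

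I would prove the lemma by duality (minimax). Let
\[
K = \ha{h \in L^2(\C)}{0 \le h \le \one_C,\ \int h \ge \lambda(C)/2}.
\]
This set is convex and weakly compact in $L^2$. The condition $\|Bh\|_\infty \le M$ is equivalent to $\operatorname{Re} \int (Bh)\, g \le M$ for all $g$ in a dense subset of the unit ball of $L^1$; take $g \in L^1 \cap L^2$, $\|g\|_1 \le 1$, so that $\int (Bh)g = \int h\,(B^*g)$ makes sense. The pairing $(h,g) \mapsto \operatorname{Re}\int h\, B^* g$ is bilinear and weakly continuous in $h$. Ky Fan's minimax theorem then reduces the lemma to a single-$g$ statement: for each such $g$, find $h \in K$ with $\operatorname{Re}\int h\, B^*g \le M$.

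For a fixed $g$ this is where the Calderón--Zygmund theory enters. $B^*$ is again a Calderón--Zygmund operator (essentially $B$ conjugated), so it is of weak type $(1,1)$: $\lambda\{|B^*g| > t\} \le A\|g\|_1/t$. Choose $t = 2A/\lambda(C)$ and let $h$ be the indicator of $C \cap \{|B^*g| \le t\}$. Then $\int h \ge \lambda(C) - A/t = \lambda(C)/2$, so $h \in K$, and $\int h\,|B^*g| \le t\,\lambda(C) = 2A =: M$. The minimax theorem yields $h \in K$ with $\operatorname{Re}\int h\,B^*g \le M$ for all admissible $g$. Applying this also to $e^{i\theta}g$ gives $|\int (Bh) g| \le M$, hence $\|Bh\|_\infty \le M$.

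I expect the main obstacle to be the functional-analytic bookkeeping in the minimax step rather than any single estimate. One must choose the topologies so that $K$ is compact and the payoff is (semi)continuous in $h$, and justify passing from the dense class of $g$ to the full $L^\infty$ bound. One must also check carefully that bounded $\partial f$ and $\bar\partial f$ for a merely continuous $f$ really give a global Lipschitz bound; this can be done by mollifying $f$ and passing to the limit.
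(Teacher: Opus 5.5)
Your argument is correct. The paper contains no proof of this statement to compare against: it is quoted from \cite{Uy} only as background. What you propose is, in substance, the classical proof of Uy's theorem: take the Cauchy potential of a density $0\le h\le\one_C$, and obtain $Bh\in L^\infty$ by dualizing the weak-type $(1,1)$ bound for the Beurling transform.

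The steps you flag as delicate are already settled by your set-up:
\begin{itemize}
\item Ky Fan's theorem needs compactness and lower semicontinuity only in $h$. Here $K$ is weakly compact in $L^2$, and $h\mapsto\operatorname{Re}\int h\,B^*g$ is weakly continuous since $B^*g\in L^2$.
\item The payoff is bilinear, so the $g$-side needs no topology.
\item Testing only against $g\in L^1\cap L^2$ suffices, because $Bh\in L^2$ a priori.
\end{itemize}

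You also rightly read the codomain as $\C$. The printed $f:\overline\C\to C$ is a typo, and taken literally it would fail for a fat Cantor set $C$. There the complement is connected and dense, so a nonconstant $f$ holomorphic on it has open image, which cannot lie inside $C$.

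Compared with the paper's own route to Laczkovich's problem, yours is intrinsically planar, and it still needs the topological argument of \cite{ACsP} to get $f(C)$ with nonempty interior. The proximal maps of \cref{main} work in every dimension and give a $1$-Lipschitz map with large image directly. Both arguments hinge on a topological existence step: Ky Fan's minimax theorem in yours, Brouwer's fixed point theorem in \cref{gamma_ub_lemma}.
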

\noindent Using a topological argument (see \cite{ACsP}), it can be shown that $f(C)$ has non-empty interior.

However, until now, the case $d \ge 3$ was open. In this paper, we will prove that the answer is positive for every $d$:
\begin{Thm} \label{lipschitz_onto_ball}
    For every measurable set $A \subseteq \R^d$ with $\lambda(A) > 0$, there exists a 1-Lipschitz map $f : \R^d \to \R^d$ such that $f(A)$ has non-empty interior.
\end{Thm}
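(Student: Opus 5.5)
The plan is to realize $f$ as a proximal map, $f=\prox_\varphi=(I+\partial\varphi)^{-1}$ for a suitable convex function $\varphi:\R^d\to(-\infty,\infty]$. Such maps are firmly nonexpansive, so they are automatically $1$-Lipschitz on all of $\R^d$. Equivalently, $f=\nabla g$, where $g=(\tfrac12|\cdot|^2+\varphi)^*$ is convex with $1$-Lipschitz gradient. Since $\prox_\varphi(x)=y$ iff $x\in y+\partial\varphi(y)$, the goal becomes: find a convex $\varphi$ such that for every $y$ in some ball $B$, the set $y+\partial\varphi(y)$ meets $A$. The main tool I would use is optimal transport (Brenier), followed by a compactness argument that upgrades ``onto up to measure zero'' to ``onto''.

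\textbf{Reductions.} By inner regularity, replace $A$ by a compact subset $K\subseteq A$ with $\lambda(K)>0$. By the Lebesgue density theorem and rescaling, we may assume $K$ lies in the unit cube $Q$ and $\lambda(K)\ge(1-\varepsilon)\lambda(Q)$, with $\varepsilon$ as small as we like. Rescaling does not affect the conclusion, since dilations conjugate $1$-Lipschitz maps to $1$-Lipschitz maps and preserve having non-empty interior.

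\textbf{Transport step.} Let $\mu$ be normalized Lebesgue measure on $K$ and $\nu$ normalized Lebesgue measure on a ball $B$ whose radius is chosen slightly smaller than the one making $\lambda(B)=\lambda(K)$. By Brenier's theorem there is a convex $h$ with $(\nabla h)_\#\nu=\mu$. If $h-\tfrac12|\cdot|^2$ is convex, then $h^*$ has $1$-Lipschitz gradient, so $f:=\nabla h^*$ is defined and $1$-Lipschitz on all of $\R^d$, and it pushes $\mu$ forward to $\nu$. Then $f(K)$ is compact, contained in $\overline B$, and carries full $\nu$-measure. A closed set of full measure in $\overline B$ must be all of $\overline B$, so $f(A)\supseteq f(K)\supseteq B$. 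This is the reason for passing to a compact $K$: it turns an a.e.\ statement into genuine surjectivity.

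\textbf{Main obstacle.} The crux is the uniform convexity $D^2h\ge I$, i.e.\ that the inverse transport is a contraction. Caffarelli's contraction theorem needs convexity or log-concavity of the relevant target, and $K$ is an arbitrary compact set with holes. So I would not use the Brenier map directly. Instead I would build $\varphi$ variationally: minimize over convex $\varphi$ (equivalently over $1$-smooth convex $g$) a functional measuring how much of $B$ fails to be covered by $\nabla g(K)$, for instance maximizing $\int_K \psi(\nabla g)$ for a suitable weight $\psi$ concentrated on $B$, or a transport cost with the $1$-smoothness constraint built in. I would then derive first-order optimality conditions, using variations of the form $\prox$ of perturbed $\varphi$ (this is where $\argmin$ characterizations of $\prox$ enter). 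The aim is to show that if some $y\in B$ were missed, one could locally modify $\varphi$ near $y$ to increase coverage. The key quantitative input would be that the sets $y+\partial\varphi(y)$, for $y$ in the uncovered region, have total measure forced to be large, while $Q\setminus K$ has measure only $\varepsilon\lambda(Q)$. Making this contradiction rigorous, i.e.\ controlling the geometry of subdifferentials of the optimizer on the missed set, is the step I expect to be hardest, and I would first attempt it in the model case where $Q\setminus K$ is a finite union of small cubes before passing to the limit by compactness of $1$-Lipschitz maps (Arzelà--Ascoli).
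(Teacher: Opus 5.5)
There is a genuine gap: the proposal never proves its one substantive claim. Your outer framework does match the paper's: realizing $f$ as $\prox_\varphi$ (so it is automatically $1$-Lipschitz), passing to a compact $K\subseteq A$, and using compactness to turn approximate surjectivity into genuine surjectivity all appear there too. What is missing is a proof that some convex $\varphi$ has $\prox_\varphi(K)\supseteq B$ for a ball $B$.

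You rightly abandon Brenier, but what replaces it is only a programme, and its ``key quantitative input'' cannot close the argument. First, the preimage $\prox_\varphi^{-1}(U)=\bigcup_{y\in U}(y+\partial\varphi(y))$ of an uncovered set $U$ need not lie in $Q$. So the smallness of $\lambda(Q\setminus K)$ gives no bound on it. Second, and more fundamentally, measure counting can at best show that $B\setminus f(K)$ has small measure, which says nothing about interior. Already $\varphi=0$ gives $f=\mathrm{id}$ and $f(K)=K$, of measure at least $(1-\varepsilon)\lambda(Q)$, while $K$ may be nowhere dense. Any proof must use $\lambda(K)>0$ in a way that distinguishes $K$ from a nowhere dense set of large measure, and your plan has no such mechanism. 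The model case with finitely many cubical holes does not help either. Each such set trivially contains a ball, and what is needed is a radius that stays uniform along the approximation, which is the whole difficulty.

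The paper supplies exactly this mechanism.
\begin{enumerate}
\item \emph{Using positive measure.} It relies on the author's earlier result that $\lambda(C)>0$ implies $\gamma(C)>0$, i.e.\ $C$ cannot be covered by a generalized strip $S(f)$ with $\Lip f$ small.
\item \emph{Turning this into a fixed-point condition.} Fix $R<\gamma(C)/2$ and a finite $V=\{v_1,\dots,v_n\}\subseteq\Bbar(0,R)$. For every $c\in\R^n$, the function $f_c(y)=\max_i(\langle v_i,y\rangle+c_i)$ has $\Lip f_c\le R$. Hence some $x\in C$ satisfies $\prox_{f_c}(x)=x-v_i$ at a point where the $i$-th piece attains the maximum. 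This says $T_i(c)=c_i$ for some $i$, where $T_i(c)=\min_{x\in C}\max_j(\langle v_j-v_i,x-v_i\rangle+c_j)$.
\item \emph{Brouwer.} Applied to $c\mapsto T(c)-(\max_i c_i)\one$ on a cube, it gives $T(c)=c$. Let $x_i\in C$ attain the minimum in $T_i(c)$. Then $g=\max_i(\langle\cdot-v_i,x_i-v_i\rangle-c_i)$ satisfies $\prox_g(x_i)=v_i$ for all $i$, with $\Lip g\le R+\max_{x\in C}|x|$.
\item \emph{Compactness.} Arzel\`a--Ascoli along finer and finer nets of $\Bbar(0,R)$ yields a convex Lipschitz $g$ with $B(0,R)\subseteq\prox_g(C)$.
\end{enumerate}
So your compactness instinct is right, but it is the \emph{target} that gets discretized, not the source. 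In transport terms this is a semi-discrete construction in which each target point only needs a nonempty preimage in $C$, not a prescribed mass. That is what lets it avoid the rigidity that sinks the Brenier route.
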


The main tool in the proof is the theory of \emph{generalized strips} introduced by the author in \cite{GStrip}. The definition uses some concepts from convex analysis. Denote by $\Gamma_0(\R^d)$ the set of lower semicontinuous convex functions $\R^d \to \R \cup \{\infty\}$ that are not infinite everywhere. Note that every convex function $\R^d \to \R$ is in $\Gamma_0(\R^d)$.

\begin{Def}
    The \emph{proximal map} of $f \in \Gamma_0(\R^d)$ is defined as
    \[\prox_f(x)=\argmin_{y\in \R^d} \zj*{f(y) + \frac{|x - y|^2}2},\]
    where $\argmin_{y} g(y)$ denotes the unique minimizer of $g$.
\end{Def}

The existence and uniqueness of the minimizer follow from the assumption $f \in \Gamma_0(\R^d)$ (see \cite[Proposition 12.15]{ConvAnal}).

We can now define generalized strips and the outer measure $\gamma$. We call a function $f:\R^d \to \R$ \emph{polyhedral} if it is the maximum of finitely many affine functions.

\begin{Def}
    Let $f:\R^d \to \R$ be a polyhedral function. The \emph{generalized strip associated to $f$} (denoted by $S(f)$) is the set of points $x\in \R^d$ such that $f$ is not differentiable at $\prox_f(x)$.

    A set $S\subseteq \R^d$ is called a \emph{generalized strip} if $S=S(f)$ for some polyhedral function $f$. The \emph{width} $w(S)$ of a generalized strip is defined as
    \[w(S)=2\inf\ha{\Lip f}{S = S(f)},\]
    where $\Lip f$ is the Lipschitz constant of $f$.
\end{Def}

\begin{Def}
    We define the outer measure $\gamma : \mathcal P(\R^d) \to [0, \infty]$ as
    \[\gamma(A) = \inf \ha*{\sum_{i=1}^\infty w(S_i)}{\text{$S_1, S_2, \dots$ are generalized strips such that $A \subseteq \bigcup_{i=1}^\infty S_i$}}.\]
\end{Def}

The key result used in the proof is the following equality:
\begin{Thm} \label{main}
    Let $C \subseteq \R^d$ be a compact set. Then
    \[\gamma(C) = \max_{g \in \Gamma_0(\R^d)} 2r(\prox_g(C)),\]
    where $r(A)$ denotes the supremum of radii of balls contained in $A$. Furthermore, this maximum is attained for some convex Lipschitz function $g : \R^d \to \R$.
\end{Thm}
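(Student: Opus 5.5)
We prove the two inequalities separately. The easy direction is $\gamma(C) \ge 2r(\prox_g(C))$ for every $g \in \Gamma_0(\R^d)$. The plan is to show that for every generalized strip $S = S(f)$, the set $\prox_g(S)$ can be covered by a (generalized) strip of width at most $w(S)$ in a sense compatible with the ball-radius functional. Concretely, we want an additive quantity $\mu$ on subsets of $\R^d$ that satisfies $\mu(B) \ge 2\rho$ for a ball of radius $\rho$ and $\mu(\prox_g(S)) \le w(S)$.

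The natural candidate is $\gamma$ itself, so we reduce to two facts. First, $\gamma(B_\rho) \ge 2\rho$: a countable family of strips of total width $<2\rho$ cannot cover a ball. Second, $\gamma(\prox_g(A)) \le \gamma(A)$: images of generalized strips under proximal maps are covered by strips of no larger width. For the first fact we would use the result from \cite{GStrip} (a Bang-type plank theorem for generalized strips), which I assume is available there. For the second, we use that $\prox_g$ is firmly nonexpansive, i.e.\ the gradient of a convex $1$-smooth function (the Moreau envelope $\frac{|x|^2}{2} - e_g$). We then check that $\prox_g(S(f))$ lies in a set of the form $S(\tilde f)$ with $\Lip \tilde f \le \Lip f$, via an inf-convolution or composition identity for proximal maps. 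This gives $\gamma(C) \ge \sup_g 2r(\prox_g(C))$.

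The hard direction is $\gamma(C) \le \max_g 2r(\prox_g(C))$, together with attainment. The plan is a duality/minimax argument. First we reduce to finite covers: by compactness and since generalized strips with polyhedral $f$ can be slightly thickened to open sets, $\gamma(C)$ is an infimum over finite covers. The condition ``$C$ is not covered by strips of total width $w$'' then becomes a statement about a sum $f = f_1 + \dots + f_n$ of polyhedral functions with $\sum \Lip f_i < w/2$. We then exploit that $x \mapsto \prox_f(x)$ interacts with sums of convex functions. For Lipschitz convex $g$ with $\Lip g = L$, we have $\prox_g(x) \in x - \partial g(\prox_g x)$, so $|\prox_g(x)-x| \le L$. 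Hence a point $y$ at the center of a ball of radius $\rho$ in $\prox_g(C)$ should be witnessed by a ``generalized strip'' of width $2L$ failing to cover.

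I would set $\Phi(C) = \sup_g 2r(\prox_g(C))$ and show that $\Phi$ is countably subadditive on compact sets and at least $w(S)$ on pieces of strips, in the spirit of a Hahn--Banach/Sion minimax over the convex set of functions $g$ (convex combinations of prox-generating functions via the Moreau envelope being convex in the right parametrization). This step is the main obstacle: we must produce, from the failure of every cheap cover, a single convex $g$ whose proximal image of $C$ contains a large ball. The first thing I would try is an iterative/limiting construction: take near-optimal finite strip covers, use the characterization that $x \in S(f)$ iff $\prox_f(x)$ lies on a nondifferentiability face, and extract $g$ as a limit of the functions $f$ in the Lipschitz topology. Arzelà--Ascoli, after normalizing $g$ at a point, gives a convergent subsequence with $\Lip g \le \gamma(C)/2$. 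Upper semicontinuity of $g \mapsto r(\prox_g(C))$ for compact $C$ (prox maps converge uniformly on compacts, images of compacts converge in Hausdorff distance) then yields that the maximum is attained by a convex Lipschitz $g$.
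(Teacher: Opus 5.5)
Both directions have genuine gaps, and the decisive one is in the inequality $\gamma(C)\le\sup_g 2r(\prox_g(C))$. You never make the minimax idea concrete, and there is no evident convex--concave structure in $g\mapsto r(\prox_g(C))$ to feed to Sion. Your fallback, taking $g$ as a limit of near-optimal covering functions $f$, cannot work. If $C\subseteq S(f)$, then by definition $\prox_f(C)$ lies in the nondifferentiability set of $f$, a finite union of pieces of hyperplanes. So $r(\prox_f(C))=0$, and upper semicontinuity only carries this $0$ to the limit. For instance, for $C=\Bbar(0,\rho)$ the optimal cover is $f(y)=\rho|\sk{e}{y}|$ with $\prox_f(C)\subseteq e^\perp$, while the maximizer is $g=0$.

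Your bound $\Lip g\le\gamma(C)/2$ is also false. Let $C\subseteq\R^2$ be the upper half of $\Bbar(0,\rho)$ together with the lower half of $\Bbar(-he_2,\rho)$, with $h>2\rho$. Then $\gamma(C)=2\rho$ (use $g(x)=h\max(0,-x_2)$ and \cref{gamma_lambda_ub}). But by an area and connectedness argument, any $g$ with a disc of radius $\rho$ in $\prox_g(C)$ identifies a point of one half with a point of the other, so $\Lip g\ge h/2$.

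The missing idea is a dual fixed-point construction. Fix a finite $V=\{v_1,\dots,v_n\}\subseteq\Bbar(0,R)$ with $2R<\gamma(C)$, and test non-coverage against the whole family $f_c(y)=\max_i(\sk{v_i}{y}+c_i)$, $c\in\R^n$, whose gradients are the target points. A point $x\in C\setminus S(f_c)$ has $\prox_{f_c}(x)=x-v_i$ for some $i$, which forces $c_i=T_i(c):=\min_{x\in C}\max_j(\sk{v_j-v_i}{x-v_i}+c_j)$. Brouwer applied to $c\mapsto T(c)-(\max_i c_i)\one$ on a cube yields $T(c)=c$. With the minimizing $x_i$, the function $g=\max_i(\sk{\cdot-v_i}{x_i-v_i}-c_i)$ satisfies $x_i-v_i\in\partial g(v_i)$, i.e.\ $V\subseteq\prox_g(C)$, with $\Lip g\le M+R$ where $M=\max_{x\in C}|x|$. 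Nets and Arzel\`a--Ascoli then give one $g$ with $B(0,\gamma(C)/2)\subseteq\prox_g(C)$. Attainment follows from the lower bound, with no semicontinuity needed.

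The lower bound is also not established. It rests on $\gamma(\prox_g(A))\le\gamma(A)$, for which you give no argument. No composition or inf-convolution identity of the kind you invoke is available in general (a composition of proximal maps is usually not a proximal map). It is not even clear that $\prox_g(S(f))$ lies in a generalized strip of width at most $w(S(f))$. Your other ingredient, $\gamma(\Bbar(0,\rho))\ge2\rho$, is precisely the case $g=0$ of the inequality you are proving, so deferring it to \cite{GStrip} is not justified.

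The paper argues directly for all $g$. By \cref{gstrip_compact_cover} it suffices to rule out a single strip $S(f)\supseteq C$ with $f=\max_i(\sk{v_i}{\cdot}+c_i)$ and $\Lip f<r$, where $B(0,r)\subseteq\prox_g(C)$. Choose $i$ maximizing $g(v_i)+c_i$ and then $|v_i|$, and pick $x_\varepsilon\in C$ with $\prox_g(x_\varepsilon)=(1+\varepsilon)v_i$. Since $x_\varepsilon\in S(f)$, some $v_j\ne v_i$ lies in $\partial f(x_\varepsilon-v_i)$. Letting $\varepsilon\to0$, monotonicity of $\partial f$ and $\partial g$ forces $\sk{v_i}{v_j-v_i}\ge0$ and $|v_j|\le|v_i|$, hence $v_j=v_i$, a contradiction.
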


An application of \cref{main} is the following equivalence:
\begin{Thm} \label{gamma_null_equiv}
    For any $A \subseteq \R^d$, $\gamma(A) = 0$ if and only if $\lambda(A) = 0$.
\end{Thm}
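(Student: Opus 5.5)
The statement has two directions: $\lambda(A)=0\Rightarrow\gamma(A)=0$ and $\gamma(A)=0\Rightarrow\lambda(A)=0$. I plan to prove the second by a direct volume estimate for a single generalized strip. For the first I plan to use \cref{main} together with a regularity property of $\gamma$, and that regularity step is where I expect the real difficulty.

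\emph{Step 1: $\gamma(A)=0\Rightarrow\lambda(A)=0$.} I will prove a localized volume bound
\[
\lambda\bigl(S\cap B(0,R)\bigr)\le C_d R^{d-1}\,w(S)
\]
for every generalized strip $S$. Granting this, a covering $A\subseteq\bigcup S_i$ with $\sum w(S_i)<\varepsilon$ gives $\lambda(A\cap B(0,R))\le C_dR^{d-1}\varepsilon$. Letting $\varepsilon\to0$ and then $R\to\infty$ yields $\lambda(A)=0$.

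To prove the bound, take a polyhedral $f$ with $S=S(f)$ and $\Lip f\le L$. Let $e_f$ be the Moreau envelope of $f$. It is convex and $C^{1,1}$, with
\[
\nabla e_f(x)=x-\prox_f(x)\in\partial f(\prox_f(x)),
\]
so $|\nabla e_f|\le L$. Fix a unit vector $u$ and a line $\ell=\{x+tu\}$, and set $h(t)=\langle\nabla e_f(x+tu),u\rangle$. Then $h$ is nondecreasing, since $e_f$ is convex, and $|h|\le L$, so the total increase of $h$ along $\ell$ is at most $2L$.

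Since $f$ is polyhedral, for a.e. point of $\R^d$ the point $\prox_f(x)$ lies in the relative interior of a face $F$ of the polyhedral decomposition of $f$. Near such $x$ the map $\prox_f$ is affine with linear part the orthogonal projection $P_F$ onto the direction space of $F$. Hence $D\nabla e_f=I-P_F$, and
\[
h'(t)=|P_F^\perp u|^2 .
\]
If $x\in S$, then $F$ has dimension at most $d-1$, since $f$ is not differentiable there. Averaging over $u\in S^{d-1}$ gives $\int_{S^{d-1}}|P_F^\perp u|^2\,du\ge c_d>0$. Now integrate $\one_{S\cap B(0,R)}$ along all lines through $B(0,R)$ in direction $u$, using $\int_\ell \one_S\,h'\le 2L$ on each line. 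Then average over $u$; Fubini combined with this lower bound gives the claimed estimate. Taking the infimum over $L$ replaces $2L$ by $w(S)$.

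\emph{Step 2: $\lambda(A)=0\Rightarrow\gamma(A)=0$ for compact $A$.} Let $C$ be compact with $\lambda(C)=0$. Every $\prox_g$ is $1$-Lipschitz, so $\lambda(\prox_g(C))=0$ and $\prox_g(C)$ contains no ball. Thus $r(\prox_g(C))=0$ for all $g$, and \cref{main} gives $\gamma(C)=0$.

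\emph{Step 3: general null sets (main obstacle).} A null set $A$ lies in a null $G_\delta$ set $B$, but $B$ need not be $\sigma$-compact. One cannot instead cover $A$ by small cubes, because a cube of side $s$ already has $\gamma\ge s$ (take $g=0$ in \cref{main}). My plan is to show that $\gamma$ is a Choquet capacity and then apply Choquet's capacitability theorem, which gives $\gamma(B)=\sup\{\gamma(K):K\subseteq B\text{ compact}\}=0$ by Step 2. This requires two properties.

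\begin{enumerate}
\item \emph{Continuity along decreasing compacts.} I will use \cref{main} with a compactness argument: take the maximizing Lipschitz $g_n$ for $C_n$, normalized suitably, and pass to a limit $g$ by Arzelà--Ascoli. Then use upper semicontinuity of the inscribed radius under Hausdorff convergence of $\prox_{g_n}(C_n)$ to $\prox_g(\bigcap C_n)$.
\item \emph{Continuity along increasing arbitrary sets.} This is the hard part. I would first try to exploit that covers by open neighbourhoods of strips, of slightly larger width, can be passed to the limit, i.e.\ a Carathéodory-type argument.
\end{enumerate}

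If the increasing-sets property proves intractable, the fallback is a direct construction for a null $G_\delta$ set. This would use the Step 1 slicing picture in reverse: one builds polyhedral $f$ whose nondifferentiability set hugs a fine polyhedral approximation of $A$.
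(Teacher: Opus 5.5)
\textbf{Where the proposal stands.} Steps 1 and 2 are fine. Step 1 is actually a self-contained route to the direction the paper does not prove but cites from the earlier generalized-strip paper (Proposition 3.12 there). The Moreau-envelope slicing works: $h$ is monotone with $|h|\le L$, $h'=|P_F^\perp u|^2$ a.e., and averaging over $u$ gives $\lambda(S\cap B(0,R))\le d\,\omega_{d-1}R^{d-1}w(S)$, which is enough. Step 2 is the paper's compact case. The genuine gap is Step 3: you have not proved $\lambda(A)=0\Rightarrow\gamma(A)=0$ for non-compact $A$. The Choquet route depends on continuity of $\gamma$ along increasing sequences of arbitrary sets. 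You leave that open, and it is far from clear for a cover-defined content like $\gamma$. The fallback construction is not an argument. So as written, the proposal proves only one direction in general.

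\textbf{The missing idea.} Step 2 is quantitative, and you should merge cubes \emph{before} estimating. If $B(v,r)\subseteq\prox_g(C)$, then $\omega_d r^d\le\lambda(\prox_g(C))\le\lambda(C)$, since $\prox_g$ is $1$-Lipschitz. Hence \cref{main} gives
\[
\gamma(C)\le 2\zj*{\frac{\lambda(C)}{\omega_d}}^{1/d}
\]
for every compact $C$. Your objection to small cubes only bites if you estimate cube by cube. Because $t\mapsto t^{1/d}$ is concave, merging cubes into larger compact sets helps. Here is the argument:
\begin{enumerate}
\item Fix $\varepsilon>0$. Cover the null set $A$ by closed cubes $Q_1,Q_2,\dots$ with $\sum_n\lambda(Q_n)\le\omega_d(\varepsilon/2)^d$.
\item Using that the tails tend to $0$, choose $0=N_0<N_1<N_2<\dots$ with $\sum_{n>N_{k-1}}\lambda(Q_n)\le\omega_d(\varepsilon 2^{-k})^d$ for every $k\ge1$.
\item Set $C_k=Q_{N_{k-1}+1}\cup\dots\cup Q_{N_k}$. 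These are finite unions, hence compact, and they cover $A$. Moreover $\lambda(C_k)\le\omega_d(\varepsilon 2^{-k})^d$, so $\gamma(C_k)\le 2\varepsilon 2^{-k}$.
\item Countable subadditivity of the outer measure $\gamma$ gives $\gamma(A)\le 2\varepsilon$. Since $\varepsilon$ was arbitrary, $\gamma(A)=0$.
\end{enumerate}
This is exactly the paper's argument in \cref{gamma_lambda_ub}, which more generally yields $\gamma(A)\le 2(\lambda^*(A)/\omega_d)^{1/d}$. No $G_\delta$ reduction or capacitability theory is needed.
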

The implication $\gamma(A) = 0 \Rightarrow \lambda(A) = 0$ was already proved in \cite[Proposition 3.12]{GStrip}. The reverse implication follows from an upper bound for $\gamma$ that will be proved in \cref{gamma_lambda_ub}. We remark that \cref{gamma_null_equiv} answers all three questions in \cite{GStrip}.

From \cref{main,gamma_null_equiv}, we can easily prove \cref{lipschitz_onto_ball}. First, by the regularity of the Lebesgue measure, it is enough to prove \cref{lipschitz_onto_ball} for a compact set $C$. If $\lambda(C) > 0$, then by \cref{gamma_null_equiv}, we have $\gamma(C) > 0$. By \cref{main}, there exists some convex $g : \R^d \to \R$ such that $2r(\prox_g(C)) = \gamma(C) > 0$. Thus, $\prox_g(C)$ has nonempty interior. This completes the proof, since $\prox_g$ is a 1-Lipschitz map (see \cite[Proposition 12.28]{ConvAnal}).

\cref{gamma_null_equiv} gives further corollaries for a question proposed by Kolmogorov (see \cite{KQuestion}):
\begin{Question}[Kolmogorov]
    Let $A\subseteq \R^2$ be a measurable set such that its Lebesgue measure $\lambda(A)$ is finite. Is it true that for every $\varepsilon > 0$, there exists a 1-Lipschitz map $f$ such that $f(A)$ is a polygon and $\lambda(f(A)) \ge \lambda(A) - \varepsilon$?
\end{Question}
\noindent While such a map does not always exist by a result of Balka, Elekes and Máthé \cite{BEM}, for compact $A$, the question is open. A positive result was given in \cite[Theorem 1.9]{GStrip} under the assumption that $A$ is a bounded set with $\gamma(\partial A) = 0$. Combined with \cref{gamma_null_equiv}, this gives the following corollary:
\begin{Cor}
    Let $A \subseteq \R^d$ be a Jordan measurable set. For every $\varepsilon > 0$, there exists a 1-Lipschitz map $f : \R^d \to \R^d$ such that $\lambda(f(A)) \ge \lambda(A) - \varepsilon$, furthermore, $f(A)$ is a polyhedron in the sense that its boundary can be covered by finitely many hyperplanes.
\end{Cor}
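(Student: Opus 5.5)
The plan is to reduce the corollary to \cite[Theorem 1.9]{GStrip} by verifying its hypothesis for Jordan measurable sets. The hypothesis requires $A$ to be bounded and $\gamma(\partial A) = 0$, and \cref{gamma_null_equiv} lets us replace the condition on $\gamma$ by a condition on $\lambda$.

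First, recall what Jordan measurability means. A set $A \subseteq \R^d$ is Jordan measurable exactly when it is bounded and its topological boundary $\partial A$ is Lebesgue null. So boundedness is built into the definition, and we have $\lambda(\partial A) = 0$. By \cref{gamma_null_equiv}, the implication $\lambda(\partial A)=0 \Rightarrow \gamma(\partial A)=0$ then gives $\gamma(\partial A) = 0$. This is the direction coming from the upper bound for $\gamma$ that the paper will establish in \cref{gamma_lambda_ub}.

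With both hypotheses checked, we apply \cite[Theorem 1.9]{GStrip} to $A$ and the given $\varepsilon > 0$. It produces a 1-Lipschitz map $f:\R^d\to\R^d$ with $\lambda(f(A)) \ge \lambda(A)-\varepsilon$, where $f(A)$ is a polyhedron in the stated sense: its boundary is covered by finitely many hyperplanes. Since $A$ is Jordan measurable, $\lambda(A)$ is simply its Lebesgue measure, so no further interpretation of the measure is needed.

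The only real step is checking that the conclusion of \cite[Theorem 1.9]{GStrip} is stated in exactly this form, in arbitrary dimension $d$ and with the same notion of polyhedron. If it instead phrases the conclusion through finitely many generalized strips, or via a polyhedral $f$ built from $\prox$ maps, one extra remark is needed. The image is then a finite union of pieces whose boundaries lie in finitely many hyperplanes, because each piece comes from the finitely many affine functions defining a polyhedral function. All the substantive difficulty lies in \cref{gamma_null_equiv}, which we may assume.
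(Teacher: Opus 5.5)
Your proposal is correct and is the paper's argument: Jordan measurability gives boundedness and $\lambda(\partial A)=0$, \cref{gamma_null_equiv} (via \cref{gamma_lambda_ub}) upgrades this to $\gamma(\partial A)=0$, and \cite[Theorem 1.9]{GStrip} then produces the required 1-Lipschitz map. Your hedging final paragraph is not needed, because the paper cites that theorem as giving exactly the stated conclusion.
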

The above mentioned \cite[Theorem 1.9]{GStrip} also applies to a variant of Kolmogorov's question concerning maps with uniformly small displacement. Combined with \cref{gamma_null_equiv}, we obtain another corollary:
\begin{Cor}
    Let $A \subseteq \R^d$ be a compact Lebesgue null set. For every $\varepsilon > 0$, there exists a 1-Lipschitz map $f : \R^d \to \R^d$ such that $|f(x) - x| \le \varepsilon$ for every $x \in \R^d$, and $f(A)$ can be covered by finitely many hyperplanes.
\end{Cor}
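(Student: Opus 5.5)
The corollary follows from \cref{gamma_null_equiv} together with \cite[Theorem 1.9]{GStrip}; the work is only in checking hypotheses.

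Let $A \subseteq \R^d$ be compact with $\lambda(A) = 0$. By \cref{gamma_null_equiv}, $\gamma(A) = 0$. The small-displacement part of \cite[Theorem 1.9]{GStrip} is stated for compact sets of $\gamma$-measure zero. It provides, for every $\varepsilon > 0$, a 1-Lipschitz map $f$ with $|f(x) - x| \le \varepsilon$ whose image $f(A)$ is covered by finitely many hyperplanes, which is exactly the claim.

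To make the mechanism transparent, I would recall how that theorem is proved, in case its hypotheses are phrased differently. First, since $\gamma(A) = 0$ and $A$ is compact, cover $A$ by generalized strips $S(f_1), S(f_2), \dots$ with $\sum_i w(S(f_i)) < \varepsilon$. Second, pass to a finite subcover, after a slight thickening of the strips so that they become open. Third, compose the proximal maps $\prox_{f_i}$ of the polyhedral functions $f_i$, choosing each with $2\Lip f_i$ close to $w(S_i)$. Each $\prox_{f_i}$ is 1-Lipschitz and moves points by at most $\Lip f_i$. Hence the composition is 1-Lipschitz and has total displacement at most $\sum_i \Lip f_i < \varepsilon$. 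Finally, points of $S(f_i)$ are sent by $\prox_{f_i}$ into the nondifferentiability set of $f_i$, which is contained in finitely many hyperplanes. Later maps are piecewise affine and send hyperplane pieces into finitely many hyperplanes.

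The only delicate step is the compactness argument. Generalized strips need not be open, so extracting a finite subcover requires either the thickening described above or the form of the statement actually proved in \cite{GStrip}. I would rely on that cited theorem rather than redo it.
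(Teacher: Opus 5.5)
Your main argument is the paper's: \cref{gamma_null_equiv} turns $\lambda(A)=0$ into $\gamma(A)=0$, and \cite[Theorem 1.9]{GStrip} then supplies the map. The paper quotes that theorem's hypothesis as ``$A$ bounded with $\gamma(\partial A)=0$'', which holds here because a compact null set has empty interior, so its boundary is $A$ itself.

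Your fallback reconstruction would not work as written. A point $x\in S(f_2)\setminus S(f_1)$ is merely translated by $\prox_{f_1}$, to some $x-v_i$, and that point need not lie in $S(f_2)$. So the composition does not send $A$ into finitely many hyperplanes. The thickening and finite-subcover step is also unjustified.

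The reconstruction is unnecessary anyway. By \cref{gstrip_compact_cover}, since $\gamma(A)=0$ there is a single polyhedral $f$ with $A\subseteq S(f)$ and $\Lip f<\varepsilon$. Then $\prox_f$ alone does everything. It is 1-Lipschitz. It moves each point $x$ by $x-\prox_f(x)\in\partial f(\prox_f(x))$, a vector of norm at most $\Lip f$. It maps $S(f)$ into the non-differentiability set of $f$, which lies in finitely many hyperplanes.
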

\noindent This gives a positive answer to \cite[Question 4.2]{BEM}.

\section*{Use of artificial intelligence}

The proof of \cref{main} was obtained using ChatGPT on 26 October. In particular, the proof of \cref{gamma_ub}, including the statement and proof of \cref{gamma_ub_lemma}, is entirely due to ChatGPT. This proof was rewritten by the author to make it easier to understand. The proof of \cref{gamma_lambda_ub} was also written with the help of ChatGPT.

The lower bound for $\gamma(C)$ in \cref{gamma_lb}, and the consequences of the equality in \cref{main} were already known by the author in early 2026; no AI tools were used for these results. We remark that while the lower bound is not necessary for proving \cref{lipschitz_onto_ball}, it was essential for coming up with the statement of \cref{main}. When asked to give lower bounds for the value of $\gamma$ for specific sets, ChatGPT did not come up with this lower bound, though it could prove it given the statement. However, the proof of \cref{gamma_lb} is due to the author and was made without AI tools.

\section{Proof of \cref{main}}

In the proof, we will use some basic facts from convex analysis.

\begin{Def}
    Let $f : \R^d \to \R \cup \{\infty\}$ be a function, and let $x\in \R^d$. The \emph{subdifferential} of $f$ at $x$ (denoted by $\partial f(x)$) is the set
    \[\partial f(x) = \ha[\big]{v \in \R^d}
    {\forall y \; f(y) \ge f(x) + \sk{v}{y - x}}.\]
\end{Def}

\begin{Prop}[{\cite[Example 20.3]{ConvAnal}}] \label{subdiff_mono}
    Let $f \in \Gamma_0(\R^d)$. If $v \in \partial f(x)$ and $u \in \partial f(y)$. Then $\sk{v - u}{x - y} \ge 0$.
\end{Prop}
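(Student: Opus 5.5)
The claim is the monotonicity of the subdifferential, and the plan is to add the two subgradient inequalities. First, I would check that $f(x)$ and $f(y)$ are finite, so that the subtractions below make sense. If $f(x) = \infty$, the inequality defining $v \in \partial f(x)$ would say $f(z) \ge \infty$ for every $z$. That would force $f \equiv \infty$, which contradicts $f \in \Gamma_0(\R^d)$. So $f(x) < \infty$, and since $f$ takes values in $\R \cup \{\infty\}$ it is finite. The same argument gives $f(y) \in \R$.

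Next, I would apply the definition of the subdifferential at each point, evaluated at the other point. From $v \in \partial f(x)$, taking the test point $y$, we get
\[f(y) \ge f(x) + \sk{v}{y - x}.\]
From $u \in \partial f(y)$, taking the test point $x$, we get
\[f(x) \ge f(y) + \sk{u}{x - y}.\]

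Since both function values are finite real numbers, I can add the two inequalities and cancel $f(x) + f(y)$. This leaves $0 \ge \sk{v}{y - x} + \sk{u}{x - y}$. The right-hand side equals $-\sk{v - u}{x - y}$, so $\sk{v - u}{x - y} \ge 0$.

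There is no real obstacle in this argument. The only point needing care is the finiteness check in the first step, which justifies the cancellation; the lower semicontinuity and convexity in $\Gamma_0(\R^d)$ are not needed beyond properness.
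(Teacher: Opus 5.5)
Your proof is correct and matches the argument in the cited \cite[Example 20.3]{ConvAnal}, which the paper quotes without proof. That argument also adds the two subgradient inequalities after noting that properness forces $f(x)$ and $f(y)$ to be finite, and it too needs only properness of $f$, as you observed.
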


\begin{Prop}[{\cite[Proposition 16.44]{ConvAnal}}] \label{prox_subdiff}
    For $x, y\in \R^d$, $y = \prox_f(x)$ if and only if
    $x - y \in \partial f(y)$.
\end{Prop}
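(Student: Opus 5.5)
The statement is \cref{prox_subdiff}. I would prove it directly from the definitions of $\prox_f$ and $\partial f$, using only the convexity of $f$ and the expansion of the quadratic term. Fix $x$ and set $h(z) = f(z) + \frac{|x-z|^2}{2}$, so that $\prox_f(x)$ is the unique minimizer of $h$. The key identity is, for all $y, z$,
\[\frac{|x-z|^2}{2} = \frac{|x-y|^2}{2} - \sk{x-y}{z-y} + \frac{|z-y|^2}{2}.\]

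\emph{($\Leftarrow$)} Suppose $x - y \in \partial f(y)$. Then $f(y)$ is finite, because $f$ is not identically $\infty$ and the subgradient inequality $f(z) \ge f(y) + \sk{x-y}{z-y}$ must hold at some $z$ with $f(z) < \infty$. Adding this inequality to the identity above gives
\[h(z) \ge h(y) + \frac{|z-y|^2}{2} \quad \text{for every } z.\]
So $y$ minimizes $h$, and hence $y = \prox_f(x)$.

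\emph{($\Rightarrow$)} Suppose $y = \prox_f(x)$. Then $h(y) \le h(z_0) < \infty$ for any $z_0$ with $f(z_0)<\infty$, so $f(y)$ is finite. Fix $z$. If $f(z) = \infty$, the subgradient inequality at $z$ is trivial. Otherwise, for $t \in (0,1)$ put $z_t = y + t(z-y)$.
\begin{itemize}
\item Minimality gives $f(z_t) - f(y) \ge \frac{|x-y|^2}{2} - \frac{|x-z_t|^2}{2}$.
\item By the identity, the right-hand side equals $t\sk{x-y}{z-y} - \frac{t^2|z-y|^2}{2}$.
\item Convexity gives $f(z_t) \le (1-t)f(y) + t f(z)$.
\end{itemize}
Combining these,
\[t\bigl(f(z) - f(y)\bigr) \ge t\sk{x-y}{z-y} - \frac{t^2}{2}|z-y|^2.\]
Dividing by $t$ and letting $t \to 0^+$ yields $f(z) \ge f(y) + \sk{x-y}{z-y}$, that is, $x - y \in \partial f(y)$.

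There is no real obstacle in this argument. The only care needed is in handling the value $+\infty$: we must check that $f(y)$ is finite before subtracting it, and note that points with $f(z) = \infty$ satisfy the subgradient inequality trivially. Lower semicontinuity is not needed for the equivalence itself; it is used only for the existence of the minimizer, which is already assumed in the definition of $\prox_f$.
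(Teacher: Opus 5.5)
Your proof is correct. The paper gives no proof of its own and simply cites \cite[Proposition 16.44]{ConvAnal}. Your argument is the standard one behind that reference: the expansion of $\frac{|x-z|^2}{2}$ around $y$ gives $(\Leftarrow)$, and perturbing along $y + t(z-y)$ with $t \to 0^+$ gives $(\Rightarrow)$, i.e.\ the variational characterization of $\prox_f$. You also handle the value $+\infty$ correctly.
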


We will also use the following property of $\gamma$:
\begin{Lemma}[{\cite[Lemma 3.15]{GStrip}}] \label{gstrip_compact_cover}
    For a compact set $C\subseteq \R^d$,
    \[\gamma(C) = \inf\ha*{w(S)}{\text{$S\supseteq C$ is a generalized strip}}.\]
\end{Lemma}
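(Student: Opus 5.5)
The inequality $\gamma(C)\le\inf\{w(S): S\supseteq C\}$ is immediate: a single generalized strip $S\supseteq C$ is an admissible cover in the definition of $\gamma$ (take $S_1=S$ and $S_i=S(0)=\emptyset$ for $i\ge2$, since the zero function is differentiable everywhere). So the content of the lemma is the reverse inequality. Fix $\varepsilon>0$ and strips $S_i=S(f_i)$ with $C\subseteq\bigcup_i S_i$ and $\sum_i 2\Lip f_i<\gamma(C)+\varepsilon$. The goal is to produce one polyhedral $f$ with $C\subseteq S(f)$ and $2\Lip f\le\gamma(C)+2\varepsilon$. I would do this in three steps: pass to open sets, use compactness to reduce to finitely many strips, then merge these into a single strip.

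\emph{Step 1 (openness).} For each $i$ I want an open set $U_i\supseteq S_i$ with $U_i\subseteq S(\tilde f_i)$ for a polyhedral $\tilde f_i$ satisfying $\Lip\tilde f_i\le\Lip f_i+\varepsilon 2^{-i-1}$. By \cref{prox_subdiff},
\[S(f)=\{\,y+v : v\in\partial f(y),\ f \text{ not differentiable at } y\,\},\]
which is a finite union of closed polyhedral pieces lying over the faces of the polyhedral complex of $f$. I would try to enlarge these pieces to a neighbourhood in one of two ways.
\begin{enumerate}
\item Replace $f$ by $(1+\delta)f$, which stretches each subdifferential $\partial f(y)$ by the factor $1+\delta$.
\item Add a small polyhedral term, for example $\delta\max_j\sk{e_j}{x}$ over a fine net of unit vectors $e_j$. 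Since $\partial(f+h)(y)=\partial f(y)+\partial h(y)$, this fattens every subdifferential, including at points where $f$ was smooth, and so thickens all of $S(f)$.
\end{enumerate}

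\emph{Step 2 (compactness).} The open sets $U_i$ cover $C$, so finitely many of them, say $U_1,\dots,U_n$, already do.

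\emph{Step 3 (merging).} I need the finite subadditivity statement: for polyhedral $f_1,\dots,f_n$ there is a polyhedral $f$ with
\[S(f)\supseteq\bigcup_{i\le n}S(f_i)\quad\text{and}\quad\Lip f\le\sum_{i\le n}\Lip f_i.\]
The natural candidate is $f=f_1+\dots+f_n$, which gives the Lipschitz bound directly. To check the inclusion, take $x\in S(f_1)$ and write $x=y+v$ with $v\in\partial f_1(y)$ and $y$ a nondifferentiability point of $f_1$. I need to find $y'$ with $x-y'\in\partial f(y')$ and $f$ nondifferentiable at $y'$. One possible route is a monotonicity/continuity argument based on \cref{subdiff_mono}: move along $t\mapsto\prox_{f_1+t(f-f_1)}(x)$ as $t$ goes from $0$ to $1$, and show that the prox point cannot escape the nondifferentiability set of the full sum $f$. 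If the plain sum fails, I would fall back on a translated combination $\sum_i f_i(\cdot-a_i)$ or an induction on $n$ using only two strips at a time.

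I expect Step 3 to be the main obstacle. The inclusion $S(f_1)\subseteq S(f_1+f_2)$ is not formal: adding $f_2$ moves the prox points, and the nondifferentiability of $f_1$ at $y$ has to be transported to the new prox point. Step 1 is the second delicate point, because option (1) only pushes the boundary of $S(f)$ outward along the subgradient directions rather than giving a full neighbourhood; I therefore expect to need the additive perturbation in option (2).
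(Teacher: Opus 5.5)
The paper does not prove this lemma; it quotes it from \cite[Lemma 3.15]{GStrip}, so your argument has to stand on its own. Your easy inequality and your skeleton (open enlargement, compactness, finite merging) are fine. However, the constructions meant to carry Steps 1 and 3 both fail, so there is a genuine gap.

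The plain sum does not give $S(f_1)\subseteq S(f_1+f_2)$. Adding $f_2$ does not preserve the pieces $y+\partial f_1(y)$ of $S(f_1)$; it translates them by the local gradient of $f_2$. In $d=1$, take $f_1=a|x|$, $f_2=b|x-t|$ with $t>b>0$. Then $S(f_1)=[-a,a]$, but
\[S(f_1+f_2)=[-a-b,\,a-b]\cup[t+a-b,\,t+a+b]\not\ni a.\]
So no homotopy or monotonicity argument can prove the inclusion, and pairwise induction does not help, since the two-strip case is exactly what fails.

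Fixed translations $f_i(\cdot-a_i)$ cannot repair this for $d\ge2$. For $f_1=a|x_1|$ and $f_2=b|x_1+x_2|$, the vertical strip is pushed by $+b$ in the $x_1$-direction on one side of the fold line of $f_2$ and by $-b$ on the other. Translating $f_1$ only moves its strip rigidly, and translating $f_2$ only moves the fold line.

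Your option (2) in Step 1 is also a plain sum and fails for the same reason. The function $\delta\max_j\sk{e_j}{x}$ is differentiable off finitely many cones, so for most $y$ the set $\partial f(y)+\partial h(y)$ is a translate of $\partial f(y)$, not a fattening. For example, $f=a|x-t|$, $h=\delta|x|$ with $t>\delta$ replaces the piece $[t-a,t+a]$ by $[t-a+\delta,t+a+\delta]$.

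The missing idea is a merge that compensates the shift piece by piece. Use the criterion implicit in the proof of \cref{gamma_ub_lemma}: if $F=\max_i(\sk{u_i}{\cdot}+e_i)$ with distinct $u_i$, then $x\notin S(F)$ iff for some $i$ one has $\sk{u_i-u_j}{x-u_i}+e_i-e_j>0$ for all $j\ne i$. For irredundant $f_1=\max_i(\sk{v_i}{\cdot}+c_i)$ and $f_2=\max_k(\sk{w_k}{\cdot}+d_k)$, set
\[h(x)=\max_i\bigl(\sk{v_i}{x}+c_i+f_2(x+v_i)\bigr)=\max_{i,k}\bigl(\sk{v_i+w_k}{x}+c_i+d_k+\sk{v_i}{w_k}\bigr).\]
Then $\Lip h\le\Lip f_1+\Lip f_2$. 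Suppose $x\notin S(h)$ and $(i,k)$ is active at $\prox_h(x)=x-v_i-w_k$. The pieces $(j,k)$ with $j\ne i$ and $(i,l)$ with $l\ne k$ have other gradients, hence are strictly inactive there. Those strict inequalities reduce exactly to the criterion for $f_1$ with index $i$ and for $f_2$ with index $k$ at $x$. Thus $S(f_1)\cup S(f_2)\subseteq S(h)$.

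Your option (1) also works once the constants are corrected. For irredundant $f=\max_i(\sk{v_i}{\cdot}+c_i)$, let $\tilde f=(1+\delta)\max_i\bigl(\sk{v_i}{\cdot}+c_i+\tfrac\delta2|v_i|^2\bigr)$. Its criterion reads $\sk{v_i-v_j}{x-v_i}+c_i-c_j>\tfrac\delta2|v_i-v_j|^2$. This implies the criterion for $f$ at every $z$ with $|z-x|<\tfrac\delta2\min_{j\ne i}|v_i-v_j|$. Hence $S(\tilde f)$ contains a neighbourhood of $S(f)$, while $\Lip\tilde f\le(1+\delta)\Lip f$.

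With these two constructions, your compactness argument goes through.
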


We begin by proving the lower bound for $\gamma(C)$:
\begin{Prop} \label{gamma_lb}
    Let $g \in \Gamma_0(\R^d)$. For every compact set $C \subseteq \R^d$,
    \[\gamma(C) \ge 2 r(\prox_g(C)).\]
\end{Prop}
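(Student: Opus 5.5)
The plan is to argue by contradiction. By \cref{gstrip_compact_cover} it suffices to show that whenever $C\subseteq S(f)$ for a polyhedral $f$ with $\Lip f = L$, we have $r(\prox_g(C))\le L$. So suppose the open ball $B(p,\rho)$, with $\rho>L$, lies in $\prox_g(C)\subseteq \prox_g(S(f))$. Let $N$ be the set where $f$ is not differentiable. It is a finite union of convex polyhedral pieces of dimension at most $d-1$ (pieces of the boundaries of the linearity regions of $f$). By \cref{prox_subdiff}, $x\in S(f)$ exactly when $x=y+v$ with $y\in N$ and $v\in\partial f(y)$, and then $|v|\le L$. Thus every $z\in B(p,\rho)$ has the form $z=\prox_g(y+v)$ with $y\in N$, $v\in\partial f(y)$, and, again by \cref{prox_subdiff}, $y+v-z=:u\in\partial g(z)$.

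As a first step I would record the two monotonicity relations from \cref{subdiff_mono}, with $(z,y,v,u)$ and $(z',y',v',u')$ denoting two such representations. Monotonicity of $\partial g$ gives
\[
|z-z'|^2\le \sk{(y-y')+(v-v')}{z-z'}, \qquad \sk{v-v'}{y-y'}\ge 0,
\]
where the first inequality comes from $\sk{u-u'}{z-z'}\ge 0$ after substituting $u=y+v-z$, and the second is monotonicity of $\partial f$. Because $\prox_g$ is $1$-Lipschitz, $\prox_g(S(f))$ lies in the $L$-neighbourhood of $\prox_g(N)$, and $\prox_g(N)$ is $\lambda$-null. This alone does not suffice: an $L$-neighbourhood of a null $(d-1)$-dimensional set can contain balls of radius close to $L$. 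The sharper information must come from the direction of $v$. On a relatively open $(d-1)$-dimensional piece $F$ of $N$, $\partial f(y)$ is a segment parallel to the normal $n_F$. So the points of $S(f)$ lying over $F$ form a slab-like set $F+[a,b]\,n_F$, with $|a|,|b|\le L$. Lower-dimensional pieces contribute sets of measure zero after applying $\prox_g$.

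The second step, and the one I expect to be the main obstacle, is to exclude the centre region. The natural attempt is to choose $z$ near $p$ and test the first inequality against $z'=p\pm s e$ for suitable unit vectors $e$ and $s$ close to $\rho$. The aim is to force $|z-z'|\le L$-type bounds, contradicting $\rho>L$. The naive estimate only gives $|z-z'|\le |y-y'|+2L$, so one must exploit the structure of $N$ to control the $y$-terms. On a single facet, the vector $y-y'$ is tangent to the facet while $v-v'$ is normal to it. I would first handle the model case where $f$ is the maximum of two affine functions, so that $S(f)$ is a genuine slab of width $2L$. I would verify there that $\prox_g$ maps a slab into a set containing no ball of radius greater than $L$: restricted to a line orthogonal to the slab, $\prox_g$ is monotone in the sense of \cref{subdiff_mono}, and it is $1$-Lipschitz. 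I would then try to extend this to the general polyhedral case. For that I would use a topological/degree argument on the boundary sphere of $B(p,\rho')$, with $L<\rho'<\rho$, to show that some point near $p$ is not reached from any of the finitely many slabs. If this fails, I would fall back on induction over the number of affine pieces of $f$, using the monotonicity inequalities above to pass from one facet to its neighbours.
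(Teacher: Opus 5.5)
Your proposal has a genuine gap: it never proves the decisive step. That step is to show, for a general polyhedral $f$ with $\Lip f = L<\rho$, that some point of $B(p,\rho)$ is missed by $\prox_g(C)$, and you leave it open (``degree argument'', ``induction on the number of pieces''). Your two-point test $z_\pm=p\pm se$ with $|z_+-z_-|^2\le\langle x_+-x_-,z_+-z_-\rangle$ does settle the two-piece case. There $S(f)$ is a slab of width $|v_1-v_2|\le 2L$, which gives $2s\le|v_1-v_2|$.

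That argument cannot be extended facet by facet, because your claim that lower-dimensional strata of $N$ contribute null sets is false. For $y$ in the relative interior of a $k$-dimensional face, $\partial f(y)$ is a $(d-k)$-dimensional polytope, so that stratum contributes a $d$-dimensional piece of $S(f)$. For instance, take $f(x)=\max_i\langle v_i,x\rangle$ with the $v_i$ spread over the sphere of radius $L$. Then $S(f)\supseteq\partial f(0)=\mathrm{conv}\{v_i\}$, which contains a ball of radius close to $L$; with $g=0$ this shows the bound is essentially sharp. So the ``centre region'' you want to exclude comes from the vertices, not from slabs over facets. Moreover $S(f)$ is not contained in any slab of width $2L$, so the estimate $\langle x_+-x_-,e\rangle\le 2L$ that drives the two-piece case is unavailable.

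The missing idea is to choose the test point using $f$ and $g$ jointly.

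\begin{enumerate}
\item Translate so the ball is $B(0,r)$ by replacing $g$ with $h(v)=g(v+p)+\langle v,p\rangle$, for which $\prox_h=\prox_g-p$.
\item Write $f=\max_i(\langle v_i,\cdot\rangle+c_i)$ without redundant pieces. Pick $i$ maximizing $g(v_i)+c_i$, and among those, maximizing $|v_i|$.
\item Test the point $u_\varepsilon=(1+\varepsilon)v_i$. It lies in $B(0,r)$ precisely because $|v_i|\le\Lip f<r$; this is the only place the Lipschitz bound enters.
\item If $\prox_g(x_\varepsilon)=u_\varepsilon$ with $x_\varepsilon\in C\subseteq S(f)$, then $\partial f(x_\varepsilon-v_i)\neq\{v_i\}$, so some $v_j\neq v_i$ is active at $x_\varepsilon-v_i$.
\item By compactness of $C$, pass to a limit $x_{\varepsilon_n}\to x$ and set $y=x-v_i$. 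Then $y\in\partial g(v_i)$ and $v_j\in\partial f(y)$. The maximality of $g(v_i)+c_i$ forces equality throughout the resulting chain of inequalities, whence $|v_j|\le|v_i|$.
\item Apply monotonicity of $\partial g$ at $u_{\varepsilon_n},v_j$ and of $\partial f$ at $x_{\varepsilon_n}-v_i,\,y$, divide by $\varepsilon_n$ and let $n\to\infty$. This gives $\langle v_i,v_j-v_i\rangle\ge 0$, hence $v_j=v_i$, a contradiction.
\end{enumerate}

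Your two monotonicity inequalities are the right raw material. Without this selection of $i$ and the radial test point $(1+\varepsilon)v_i$, the argument does not close.
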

\begin{proof}
    It is enough to show that for every $r > 0$ and $v_0 \in \R^d$ such that $B(v_0, r) \subseteq \prox_g(C)$, the inequality $\gamma(C) \ge 2r$ holds. First, note that for $h(v) = g(v + v_0) + \sk{v}{v_0}$, we have $\prox_h(v) = \prox_g(v) - v_0$ for every $v \in \R^d$ by \cite[Proposition 24.8 (iii)]{ConvAnal}. It follows that
    \[B(0, r) = B(v_0, r) - v_0 \subseteq \prox_g(C) - v_0 = \prox_h(C).\]
    Therefore, we may assume without loss of generality that $v_0 = 0$.

    By \cref{gstrip_compact_cover}, it is enough to show that for a polyhedral function $f : \R^d \to \R$ such that $C\subseteq S(f)$, we have $\Lip f \ge r$. Write $f(x) = \max_{i} (\sk{v_i}{x} + c_i)$, where none of the affine functions are redundant (in particular, this implies that the $v_i$ are distinct). Assuming for contradiction that $\Lip f < r$, we have $|v_i| < r$ for every $i$.
    
    We can choose $i$ such that $g(v_i) + c_i$ is maximal (note that this maximum is finite). Among these, choose $i$ such that $|v_i|$ is maximal. Let $u_\varepsilon = (1 + \varepsilon) v_i$ for $\varepsilon > 0$. For sufficiently small $\varepsilon > 0$, we have $u_\varepsilon \in B(0, r)$. Choose an $x_\varepsilon \in C$ such that $\prox_g(x_\varepsilon) = u_\varepsilon$, and let $y_\varepsilon = x_\varepsilon - u_\varepsilon$. It follows from \cref{prox_subdiff} that $y_\varepsilon \in \partial g(u_\varepsilon)$. Assuming for contradiction that $\partial f(x_{\varepsilon} - v_i) = \{v_i\}$, it follows from \cref{prox_subdiff} that $\prox_f(x_{\varepsilon}) = x_{\varepsilon} - v_i$, therefore, $x_{\varepsilon} \notin S(f)$. This contradicts the assumption that $C \subseteq S(f)$, therefore, $\partial f(x_{\varepsilon} - v_i) \ne \{v_i\}$.
    
    By the compactness of $C$, there is a sequence $\varepsilon_n \to 0$ such that $x_{\varepsilon_n} \to x$ for some $x \in C$. Let $y = x - v_i$, clearly $y_{\varepsilon_n} \to y$. Since $\partial g$ is closed, we have $y \in \partial g(v_i)$. For every $n$, we also have a $j \ne i$ such that $v_j \in \partial f(x_{\varepsilon_n} - v_i)$. Since $j$ has finitely many possible values, we may assume that $j$ is the same for every $n$. Notice that $x_{\varepsilon_n} - v_i \to y$, hence $v_j \in \partial f(y)$.

    Using the facts that $y \in \partial g(v_i)$ and $v_j \in \partial f(y)$, we can see that
    \begin{align*}
        g(v_j) + c_j &\ge g(v_i) + \sk{v_j - v_i}{y} + c_j \\
        &= g(v_i) - \sk{v_i}{y} + (\sk{v_j}{y} + c_j) \\
        &= g(v_i) - \sk{v_i}{y} + f(y) \\
        &\ge g(v_i) - \sk{v_i}{y} + (\sk{v_i}{y} + c_i) \\
        &= g(v_i) + c_i \\
        &\ge g(v_j) + c_j.
    \end{align*}
    It follows that in all the above inequalities, we have equality. In particular, $g(v_j) = g(v_i) + \sk{v_j - v_i}{y}$, implying that $y \in \partial g(v_j)$, and $f(y) = \sk{v_i}{y} + c_i$, implying that $v_i \in \partial f(y)$. Furthermore, $g(v_j) + c_j = g(v_i) + c_i$, so by the maximality of $|v_i|$, we have $|v_j| \le |v_i|$.

    Since $y_{\varepsilon_n} \in \partial g(u_{\varepsilon_n})$ and $y \in \partial g(v_j)$, it follows from \cref{subdiff_mono} that
    $0 \le \sk{y_{\varepsilon_n} - y}{u_{\varepsilon_n} - v_j}$,
    or equivalently $\sk{y_{\varepsilon_n} - y}{v_j} \le \sk{y_{\varepsilon_n} - y}{u_{\varepsilon_n}}$.

    We can similarly use \cref{subdiff_mono} for $v_j \in \partial f(x_{\varepsilon_n} - v_i)$ and $v_i \in \partial f(y)$:
    \begin{align*}
        0 &\le \sk{v_j - v_i}{(x_{\varepsilon_n} - v_i) - y} \\
        &= \sk{v_j - v_i}{y_{\varepsilon_n} - y + u_{\varepsilon_n} -  v_i} \\
        &= \sk{v_j - v_i}{y_{\varepsilon_n} - y + \varepsilon_n v_i} \\
        &= \sk{v_j - v_i}{y_{\varepsilon_n} - y} + \varepsilon_n \sk{v_j - v_i}{v_i} \\
        &\le \sk{u_{\varepsilon_n} - v_i}{y_{\varepsilon_n} - y} + \varepsilon_n \sk{v_j - v_i}{v_i} \\
        &= \sk{\varepsilon_n v_i}{y_{\varepsilon_n} - y} + \varepsilon_n \sk{v_j - v_i}{v_i} \\
        &= \varepsilon_n \sk{v_i}{y_{\varepsilon_n} - y + v_j - v_i}.
    \end{align*}
    Dividing by $\varepsilon_n$ then taking the limit $n \to \infty$ yields the inequality $0 \le \sk{v_i}{v_j - v_i}$. It follows from the property $|v_j| \le |v_i|$ that
    \[|v_i - v_j|^2 = \zj[\big]{|v_j|^2 - |v_i|^2} - 2\sk{v_i}{v_j - v_i} \le 0,\]
    therefore $v_i = v_j$, contradicting the choice of $j$.
\end{proof}

\begin{Rem}
    The proof above can be modified to show that for $g \in \Gamma_0(\R^d)$ and a polyhedral function $f : \R^d \to \R$, we have
    \[r(\prox_g(S(f))) \le \Lip f,\]
    from which \cref{gamma_lb} follows by an application of \cref{gstrip_compact_cover}. The reason we prove the weaker form is that the non-compactness of $S(f)$ would make the limiting arguments and the calculations more complicated.
\end{Rem}

Next, we will show the upper bound for $\gamma(C)$. The proof relies on the following lemma:

\begin{Lemma} \label{gamma_ub_lemma}
    Let $C\subset\R^d$ be nonempty and compact, let $R>0$, and let $V\subseteq\Bbar(0,R)$ be a finite nonempty set. Set $M=\max_{x\in C} |x|$. If $\gamma(C) > 2R$, then there exists a polyhedral convex function $g$ with $\Lip g\leq M+R$ and $V\subseteq\prox_g(C)$.
\end{Lemma}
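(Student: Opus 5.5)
The plan is to build $g$ from prescribed subgradient data. By \cref{prox_subdiff}, $v\in\prox_g(C)$ exactly when there is some $x_v\in C$ with $x_v-v\in\partial g(v)$. So it suffices to find points $x_v\in C$ ($v\in V$) such that the finite family of pairs $(v,y_v)$, $y_v=x_v-v$, is the graph of a subdifferential. If we then set
\[g(z)=\max_{v\in V}\zj[\big]{a_v+\sk{y_v}{z-v}}\]
with suitable constants $a_v$ satisfying $g(v)=a_v$, this $g$ is polyhedral and convex with $y_v\in\partial g(v)$. Its Lipschitz constant is at most $\max_v|y_v|\le M+R$, since $|x_v|\le M$ and $|v|\le R$. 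This is where the bound $M+R$ comes from.

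The pairs are produced by duality with polyhedral functions of slope in $V$. For $c\in[-\infty,\infty)^V$ (not all $-\infty$) put $f_c(y)=\max_{v}\zj{\sk{v}{y}+c_v}$. Then $\Lip f_c\le R$, so $w(S(f_c))\le 2R$. Hence $\gamma(C)>2R$ (via \cref{gstrip_compact_cover}, or just the definition of $\gamma$) gives some $x\in C\setminus S(f_c)$. For such $x$, $f_c$ is differentiable at $y=\prox_{f_c}(x)$ with gradient some $v$ with $c_v>-\infty$, and $x-y=v$ by \cref{prox_subdiff}, so $v\in\partial f_c(x-v)$.

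Define $A_v=\{c : \exists x\in C,\ v\in\partial f_c(x-v)\}$. Suppose we find one $c$ lying in every $A_v$, with chosen witnesses $x_v$ and $y_v=x_v-v$, so that $v\in\partial f_c(y_v)$. Then $f_c(y_v)=\sk{v}{y_v}+c_v$ for all $v$. Summing the inequalities $f_c(y_{v_k})\ge\sk{v_{k+1}}{y_{v_k}}+c_{v_{k+1}}$ around any cycle shows that the pairs $(v,y_v)$ are cyclically monotone. Concretely, $a_v=-c_v$ works: $-c_w\ge -c_v+\sk{y_v}{w-v}$ for all $v,w$. So $y_v\in\partial g(v)$, and the lemma follows.

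The main step, and the expected obstacle, is finding $c\in\bigcap_v A_v$. I would use the KKM lemma on the simplex $\Delta=\{t\in[0,1]^V:\sum t_v=1\}$ via $c_v=\log t_v$ (with $\log 0=-\infty$, meaning the piece is absent). The sets $A_v\subseteq\Delta$ are closed. If $t_n\to t$ with witnesses $x_n\in C$, pass to a convergent subsequence by compactness. Then $v\in\partial f_{c_n}(x_n-v)$ means $\sk{v}{x_n-v}+c_{n,v}\ge \sk{w}{x_n-v}+c_{n,w}$ for all $w$, which passes to the limit; the limit forces $t_v>0$ unless all $t_w$ vanish, which is impossible on $\Delta$. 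The KKM face condition also holds: for $t$ in the face spanned by $T\subseteq V$, $f_c$ only involves the slopes $v\in T$. The previous paragraph then yields $x\in C\setminus S(f_c)$ with active gradient $v\in T$, so $t\in A_v$ for some $v\in T$. KKM then gives $t\in\bigcap_v A_v$, and in particular every $t_v>0$.

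The care needed is in the boundary behaviour: closedness of $A_v$ when some coordinates tend to $0$, and checking that differentiability of $f_c$ at $\prox_{f_c}(x)$ really identifies a single active slope $v$ with $c_v$ finite. Both seem routine given compactness of $C$ and finiteness of $V$.
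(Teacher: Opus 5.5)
Your argument is correct, and its skeleton is the paper's. You use the same family $f_c(y)=\max_v(\langle v,y\rangle+c_v)$ and the same use of $\gamma(C)>2R$ to find a point of $C$ off $S(f_c)$. Your final function, with $a_v=-c_v$, is the paper's $g=\max_i g_i$ with $g_i(v)=\langle v-v_i,x_i-v_i\rangle-c_i$.

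Your sets $A_v$ (in the corrected form below) are exactly the paper's sets $\{c : T_v(c)=c_v\}$, where $T_i(c)=\min_{x\in C}\max_j(\langle v_j-v_i,x-v_i\rangle+c_j)$. The two proofs differ only in the topological step that produces a $c$ lying in all of them.

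The paper keeps $c$ finite and applies Brouwer's theorem to $\Phi(c)=T(c)-m(c)\one$ on the cube $[-K,K]^n$. Normalizing by $m(c)=\max_i c_i$ exploits invariance under $c\mapsto c+s\one$. At a fixed point, the single fact that some $i$ has $T_i(c)=c_i$ forces $m(c)=0$ and hence $T(c)=c$. So there is no boundary analysis, and $\gamma(C)>2R$ is only ever applied to the full family of slopes.

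You instead use KKM on the simplex with $c=\log t$. This requires the face condition, which you get by letting pieces disappear ($t_v=0$) and applying $\gamma(C)>2R$ to the sub-maxima. That is legitimate, since their Lipschitz constants are still at most $R$. You also need closedness of $A_v$ at the boundary, which is cleanest in the multiplicative form $t_v e^{\langle v,x-v\rangle}\ge t_w e^{\langle w,x-v\rangle}$ for all $w$. Closedness of the $A_v$ plays the role of continuity of $T$, and the covering formulation makes the combinatorics very transparent. The paper's normalization trick is slightly slicker. The two routes have the same strength, since KKM and Brouwer are equivalent.

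One point needs fixing. Define $A_v$ by the condition that the $v$-th affine piece is active at $x-v$, i.e. $\langle v,x-v\rangle+c_v\ge\langle w,x-v\rangle+c_w$ for all $w$, not by $v\in\partial f_c(x-v)$. These differ when the $v$-th piece is redundant.

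For example, take $V=\{-1,0,1\}\subset\R$ and $t=(\tfrac12,0,\tfrac12)$, or $c=(0,-10,0)$. Then $f_c=|\cdot|$ and $0\in\partial f_c(0)$, although the middle piece is not active anywhere near $0$.

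Under the literal subdifferential definition, a point of $\bigcap_v A_v$ could have some $t_v=0$ or a non-active $v$-piece. Then your step ``$f_c(y_v)=\langle v,y_v\rangle+c_v$'' fails, and so does the choice $a_v=-c_v$. Your closedness argument and your final step already use the active-piece inequality, so this only requires stating the definition correctly.
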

\begin{proof}
    Let $V = \{v_1, \dots, v_n\}$. Consider a map of the form
    \[f(y) =\max_i \zj[\big]{\sk{v_i}{y}+c_i}\]
    for $c = (c_1, \dots, c_n) \in \R^n$. It is clear that $\Lip f \le R$, so by the assumption $\gamma(C) > 2R$, we have $C \nsubseteq S(f)$. Choose some $x \in C \setminus S(f)$, and let $y = \prox_f(x)$. Then $f$ is differentiable at $y$, so it is of the form $z \mapsto \sk{v_i}{z} + c_i$ in a neighborhood. For every $j$, we have
    \[\sk{v_j}{y}+c_j \le f(y) = \sk{v_i}{y} + c_i.\]
    It follows from \cref{prox_subdiff} that $y = x - v_i$. Substituting into the previous inequality gives the bound
    \[c_i \ge \sk{v_j - v_i}{x - v_i} + c_j.\]
    Let
    \[T_i(c) = \min_{x \in C} \max_j (\sk{v_j - v_i}{x - v_i} + c_j).\]
    The previous calculation shows that $c_i \ge T_i(c)$. We can also see that the case $j = i$ gives the bound $c_i \le T_i(c)$. Therefore, we have seen that for any $c \in \R^n$, there is some $i$ such that $T_i(c) = c_i$.

    Let $K = \max_{x \in C} \max_{i, j} |\sk{v_j - v_i}{x - v_i}|$. We can see that if $m(c) = \max_{i} c_i$, then $|T_i(c) - m(c)| \le K$ for every $i$. It follows that the map $\Phi(c) = T(c) - m(c) \one$ maps $[-K, K]^n$ into itself. We can check that $\Phi$ is continuous, so by Brouwer's fixed point theorem, it has some fixed point $c = \Phi(c)$. We know that for some $i$, $T_i(c) = c_i = T_i(c) - m(c)$, thus $m(c) = 0$, therefore $T(c) = c$.

    For each $i$, choose an $x_i \in C$ such that $c_i = T_i(c) = \max_{j}\zj{\sk{v_j - v_i}{x_i - v_i} + c_j}$. We have
    \[\sk{v_j - v_i}{x_i - v_i} \le c_i - c_j\]
    for every $i$ and $j$. Now define the function
    \[g_i(v) = \sk{v - v_i}{x_i - v_i} - c_i,\]
    and let $g = \max_i g_i$. We can see that for every $i$ and $j$,
    \[g_i(v_j) = \sk{v_j - v_i}{x_i - v_i} - c_i \le (c_i - c_j) - c_i = -c_j = g_j(v_j).\]
    It follows that $x_j - v_j \in \partial g(v_j)$, so by \cref{prox_subdiff},
    $\prox_g(x_j) = v_j$. We have thus shown that $V \subseteq \prox_g(C)$. Since $|x_i - v_i| \le M + R$, it is clear that $\Lip g\le M + R$, completing the proof.
\end{proof}

To finish the proof of \cref{main}, it remains to prove the following proposition:
\begin{Prop} \label{gamma_ub}
    For every compact set $C \subseteq \R^d$, there is a convex Lipschitz function $g : \R^d \to \R$ such that $B(0, \gamma(C) / 2) \subseteq \prox_g(C)$.
\end{Prop}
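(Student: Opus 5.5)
We obtain the function from \cref{gamma_ub_lemma} by approximation followed by a compactness argument. If $\gamma(C) = 0$ the statement is trivial: take $g = 0$, noting that $B(0,0)=\emptyset$. So assume $\gamma(C) > 0$ and fix $R_k \uparrow \gamma(C)/2$ with $0 < R_k < \gamma(C)/2$. For each $k$, let $V_k \subseteq \Bbar(0,R_k)$ be a finite $1/k$-net of $\Bbar(0,R_k)$. Since $\gamma(C) > 2R_k$, \cref{gamma_ub_lemma} gives a polyhedral convex $g_k$ with $\Lip g_k \le M + R_k \le M + \gamma(C)/2 =: L$ and $V_k \subseteq \prox_{g_k}(C)$. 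Proximal maps are unchanged by adding constants to $g_k$, so we normalize $g_k(0) = 0$.

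Next we extract a limit. The $g_k$ are convex, $L$-Lipschitz and vanish at $0$, so by Arzelà--Ascoli a subsequence converges locally uniformly to a convex $L$-Lipschitz $g : \R^d \to \R$. We need the stability of proximal maps: if $g_k \to g$ locally uniformly, then $\prox_{g_k}(x_k) \to \prox_g(x)$ whenever $x_k \to x$. To see this, note that $\prox_{g_k}(x_k)$ is bounded, since $|x_k - \prox_{g_k}(x_k)| \le L$ because $x_k - \prox_{g_k}(x_k) \in \partial g_k(\prox_{g_k}(x_k))$ by \cref{prox_subdiff}. Along a subsequence $\prox_{g_k}(x_k) \to p$. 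The subgradient inequality $g_k(z) \ge g_k(p_k) + \sk{x_k - p_k}{z - p_k}$ passes to the limit, giving $x - p \in \partial g(p)$, so $p = \prox_g(x)$ by \cref{prox_subdiff}. Uniqueness of the limit then yields convergence of the whole sequence. (Alternatively, one can use the fact that all proximal maps are $1$-Lipschitz, with the same conclusion.)

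Now take $v$ with $|v| < \gamma(C)/2$. For large $k$ we have $|v| < R_k$, so there are $v_k \in V_k$ with $|v_k - v| \le 1/k$, and points $x_k \in C$ with $\prox_{g_k}(x_k) = v_k$. By compactness of $C$, along a subsequence $x_k \to x \in C$, and the stability above gives $\prox_g(x) = \lim v_k = v$. Hence $B(0,\gamma(C)/2) \subseteq \prox_g(C)$, and $g$ is a convex Lipschitz function as required.

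The main obstacle is the second step, the limit passage. Its delicate points are the uniform Lipschitz bound, which holds because $M + R_k$ stays bounded, and the closedness of the subdifferential graph under locally uniform convergence; both are handled as above. The diagonal choice of subsequences is routine because all extractions can be done along one sequence: first for the $g_k$, then for each fixed $v$ separately, which is legitimate since $g$ has already been fixed.
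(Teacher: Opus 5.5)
Your proof is correct and follows the paper's argument step for step: nets $V_k$ of $\Bbar(0,R_k)$ with $R_k \uparrow \gamma(C)/2$, \cref{gamma_ub_lemma}, the normalization $g_k(0)=0$, Arzelà--Ascoli, and passing to the limit in the subgradient inequality combined with \cref{prox_subdiff}. The only difference is that you phrase the limit step as a general stability statement for proximal maps; its boundedness and uniqueness-of-limit parts are unnecessary here, since $\prox_{g_k}(x_k) = v_k \to v$ is already known, and the paper uses this directly.
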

\begin{proof}
    The case $\gamma(C) = 0$ is trivial, so we may assume that $2R := \gamma(C) > 0$. 
    Choose sequences $R_n \nearrow R$ and $\varepsilon_n \searrow 0$, and for each $n$, choose a finite $\varepsilon_n$-net $V_n$ of $\Bbar(0, R_n)$. Since $2R_n < \gamma(C)$, by \cref{gamma_ub_lemma}, there is some $(R_n + M)$-Lipschitz function $g_n : \R^d \to \R$ such that $V_n \subseteq \prox_{g_n}(C)$.
    
    Adding a constant to a function does not change the proximal map, we may assume that $g_n(0) = 0$ for every $n$. Since each $g_n$ is $(R + M)$-Lipschitz, we can apply the Arzelà--Ascoli theorem. Passing to a subsequence, we may assume $g_n$ converges locally uniformly to a convex $(R + M)$-Lipschitz function $g : \R^d \to \R$. It remains to show that $B(0, R) \subseteq \prox_g(C)$. Let $v \in B(0, R)$, and choose a sequence of points $v_n \in V_n$ converging to $v$. For each $n$, there is some $x_n \in C$ such that $\prox_{g_n}(x_n) = v_n$. By the compactness of $C$, passing to a further subsequence, we may assume that $x_n \to x \in C$.

    By \cref{prox_subdiff}, we have $x_n - v_n \in \partial g_n(v_n)$, that is,
    \[g_n(u) \ge g_n(v_n) + \sk{u - v_n}{x_n - v_n}\]
    for every $u \in \R^d$. Taking the limit $n \to \infty$ yields
    \[g(u) \ge g(v) + \sk{u - v}{x - v},\]
    or equivalently, $x - v \in \partial g(v)$. Applying \cref{prox_subdiff} again, this shows that $v = \prox_g(x)$, completing the proof.
\end{proof}

We have now shown \cref{main}. In the next proposition, we will give an explicit upper bound for $\gamma$ in terms of the Lebesgue measure, from which \cref{gamma_null_equiv} follows immediately.

\begin{Prop} \label{gamma_lambda_ub}
    For any set $A \subseteq \R^d$,
    \[\gamma(A) \le 2\zj*{\frac{\lambda^*(A)}{\omega_d}}^{1/d},\]
    where $\lambda^*$ is the Lebesgue outer measure and $\omega_d$ is the volume of the $d$-dimensional unit ball.
\end{Prop}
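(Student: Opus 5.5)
The plan is to prove the inequality first for compact sets, using \cref{main} together with the fact that $1$-Lipschitz maps do not increase Lebesgue measure, and then to pass to arbitrary sets by an approximation argument. The approximation step is the main obstacle, for a reason explained below.

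\emph{Compact case.} Let $C$ be compact. By \cref{main} there is a convex Lipschitz $g$ with $\gamma(C) = 2r(\prox_g(C))$. By \cref{gamma_ub} we may even take $\prox_g(C) \supseteq B(0, \gamma(C)/2)$. Since $\prox_g$ is $1$-Lipschitz \cite[Proposition 12.28]{ConvAnal}, it does not increase $d$-dimensional Hausdorff measure, which agrees with $\lambda$ on $\R^d$. Hence
\[\omega_d \zj*{\frac{\gamma(C)}{2}}^d = \lambda\zj[\big]{B(0, \gamma(C)/2)} \le \lambda(\prox_g(C)) \le \lambda(C).\]
This is exactly $\gamma(C) \le 2(\lambda(C)/\omega_d)^{1/d}$.

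\emph{Why the general case is delicate.} We may assume $\lambda^*(A) < \infty$. Choose an open $U \supseteq A$ with $\lambda(U) \le \lambda^*(A) + \varepsilon$. By monotonicity of $\gamma$ it suffices to show $\gamma(U) \le 2(\lambda(U)/\omega_d)^{1/d}$. Countable subadditivity of $\gamma$ alone cannot give this. If $U$ is split into compact pieces $K_k$, summing the compact bounds yields $\sum_k (\lambda(K_k)/\omega_d)^{1/d}$, which can be far larger than $(\lambda(U)/\omega_d)^{1/d}$ and can even diverge. The same issue arises already for null sets, which is the case needed for \cref{gamma_null_equiv}. So the argument must handle an increasing exhaustion $U = \bigcup_n C_n$ by compact sets $C_n$ directly, rather than a disjoint decomposition.

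\emph{Continuity along increasing compacts.} The key claim to prove is
\[\gamma\zj[\Big]{\bigcup_n C_n} \le \sup_n \gamma(C_n)\]
for increasing compact $C_n$. Given this, the compact case gives $\gamma(U) \le \sup_n 2(\lambda(C_n)/\omega_d)^{1/d} \le 2(\lambda(U)/\omega_d)^{1/d}$, and letting $\varepsilon \to 0$ finishes the proof.

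To prove the claim, I would first try to rerun the proof of \cref{gamma_ub_lemma} with $C$ replaced by $\bigcup_n C_n$. Suppose $\gamma(\bigcup_n C_n) > 2R$. For a fixed finite $V$ and each choice of coefficients $c$, the function $f = \max_i(\sk{v_i}{\cdot} + c_i)$ has $\Lip f \le R$, so $S(f)$ misses some point of $\bigcup_n C_n$, hence some point of some $C_n$. The difficulty is that $n$ may depend on $c$, so the Brouwer argument does not immediately apply.

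To fix the index, I would use the cube $[-K, K]^n$ of coefficients that appears in that proof, with $K$ taken uniform over a bounded $U$. Consider the sets of $c$ in this cube for which $C_n \not\subseteq S(f_c)$. I would try to show these are open, or at least that the failure sets are closed. If so, a compactness argument in $c$ yields a single $n$ that works for every $c$ in the cube. With that $n$ fixed, the proof of \cref{gamma_ub_lemma} goes through verbatim for $C_n$. Then \cref{gamma_ub} together with \cref{gamma_lb} gives $\gamma(C_n) \ge 2R$, and hence $\sup_n \gamma(C_n) \ge 2R$, which is the claim.

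The semicontinuity of $c \mapsto [C_n \subseteq S(f_c)]$ is where I expect the real work. It relies on how the generalized strip $S(f_c)$ varies with $c$, and that dependence is not monotone.
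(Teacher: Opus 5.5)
\textbf{There is a genuine gap in the general case.} Your compact case is correct and is exactly the paper's argument. For arbitrary sets, however, your claim that countable subadditivity cannot give the bound is mistaken. Subadditivity does suffice if only one piece is allowed to be large. Cover $A$ by compact sets $C_n$ with $\sum_n \lambda(C_n) \le \lambda^*(A)+\varepsilon$; for example, take the closed cubes of a decomposition of an open $U \supseteq A$. Since the series converges, pick $N_0<N_1<\cdots$ with $\sum_{n>N_{k-1}}\lambda(C_n)\le(\varepsilon 2^{-k})^d\omega_d$ for $k\ge 1$. Set $D_0=C_0\cup\dots\cup C_{N_0}$ and $D_k=\bigcup_{N_{k-1}<n\le N_k}C_n$. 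Each $D_k$ is compact, with $\lambda(D_0)\le\lambda^*(A)+\varepsilon$ and $\lambda(D_k)\le(\varepsilon 2^{-k})^d\omega_d$. Subadditivity and the compact case then give
\[\frac{\gamma(A)}{2}\le\zj*{\frac{\lambda^*(A)+\varepsilon}{\omega_d}}^{1/d}+\sum_{k\ge 1}\varepsilon 2^{-k},\]
and letting $\varepsilon\to 0$ finishes the proof. This is the paper's argument. Your objection, that $\sum_k(\lambda(K_k)/\omega_d)^{1/d}$ can be huge or divergent, applies to a careless decomposition. It does not apply to one regrouped so that only one piece is large and the tail pieces have geometrically decaying measure. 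For null sets the same grouping makes the bound immediate.

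\textbf{Your replacement route does not close.} The semicontinuity you flag as the main difficulty is actually easy. We have $x\notin S(f_c)$ iff, for some $i$, the $i$-th affine piece is the only one attaining the maximum defining $f_c$ at $x-v_i$. This is an open condition in $(c,x)$, so $\{c : C_n\not\subseteq S(f_c)\}$ is open. Compactness of the cube then gives a single index $N=N(V)$ for each fixed finite $V$.

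The real failure is the next step. \cref{gamma_ub_lemma} only places the finite set $V$ inside $\prox_g(C_N)$, whereas \cref{gamma_lb} needs a whole ball inside $\prox_g(C_N)$. To get a ball, \cref{gamma_ub} must use a sequence of ever finer nets $V_k$, and nothing prevents $N(V_k)\to\infty$. In that case the limiting argument only yields $B(0,R)\subseteq\prox_g\zj[\big]{\overline{\bigcup_n C_n}}$, which says nothing about any single $\gamma(C_n)$. So the increasing-sets property $\gamma(\bigcup_n C_n)\le\sup_n\gamma(C_n)$ remains unproved; it is a substantially stronger statement than what is needed here. Your argument also assumes $U$ bounded, to get a uniform $K$, without reducing to that case.
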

\begin{proof}
    First, consider the case when $A$ is compact. By \cref{main}, it is enough to show that if $g : \R^d \to \R$ is a convex function, then
    \[r(\prox_g(A)) \le \zj*{\frac{\lambda(A)}{\omega_d}}^{1/d},\]
    or equivalently,
    \[r(\prox_g(A))^d \omega_d \le \lambda(A).\]
    Since $\prox_g$ is 1-Lipschitz, we have $\lambda(\prox_g(A)) \le \lambda(A)$, from which the previous inequality follows immediately.

    Now let $A$ be an arbitrary set with $\lambda^*(A) < \infty$, and let $\varepsilon > 0$. We can take a cover $A \subseteq \bigcup_{n=0}^\infty C_n$, where each $C_n$ is compact and $\sum_{n=0}^\infty \lambda(C_n) \le \lambda^*(A) + \varepsilon$. By grouping the sets $C_n$ and replacing the sets in each group with their union in an appropriate way, we may assume that for $n \ge 1$,
    \[\lambda(C_n) \le \zj*{\frac{\varepsilon}{2^n}}^d \omega_d.\]
    Applying the bound for each $C_n$ gives
    \begin{align*}
        \frac{\gamma(A)}{2} &\le \sum_{n=0}^\infty \gamma(C_n) \\
        &\le \sum_{n=0}^\infty \zj*{\frac{\lambda(C_n)}{\omega_d}}^{1/d} \\
        &\le \zj*{\frac{\lambda^*(A) + \varepsilon}{\omega_d}}^{1/d} + \sum_{n=1}^\infty \frac{\varepsilon}{2^n} \\
        &= \zj*{\frac{\lambda^*(A) + \varepsilon}{\omega_d}}^{1/d} + \varepsilon. \\
    \end{align*}
    Taking the limit $\varepsilon \to 0$ gives the desired bound.
\end{proof}

\section*{Acknowledgements}
The author would like to thank his supervisors M. Elekes and T. Keleti for their guidance and many helpful discussions.

The author was supported by the National Research, Development and Innovation Office -- NKFIH, grants nos. 146922 and 152822. This material is based upon work supported by the Swedish Research Council under grant no. 2021-06594 while the author was in residence at Institut Mittag-Leffler in Djursholm, Sweden during the autumn of 2026.

\printbibliography

\end{document}